\theoremstyle{plain}
\newtheorem*{theorem}{Theorem}
\newtheorem{lemma}{Lemma}
\newtheorem{hypothesis}{Hypothesis}
\theoremstyle{remark}
\DeclareMathOperator{\dv}{div}
\DeclareMathOperator{\dist}{dist}
\DeclareMathOperator{\e}{e}
\newcommand{\R}{\mathbb{R}}
\newcommand{\dd}{\,\mathrm{d}}
\begin{document}

\title[Plateau angle conditions for the Allen--Cahn equation]{Plateau angle conditions for the vector-valued Allen--Cahn equation}
\author[N.~D.~Alikakos]{Nicholas D.~Alikakos}
\address{Department of Mathematics\\ University of Athens\\ Panepistemiopolis\\ 15784 Athens\\ Greece \and Institute for Applied and Computational Mathematics\\ Foundation of Research and Technology -- Hellas\\ 71110 Heraklion\\ Crete\\ Greece} 
\email{\href{mailto:nalikako@math.uoa.gr}{\texttt{nalikako@math.uoa.gr}}}
\author[P.~Antonopoulos]{Panagiotis Antonopoulos}
\address{Department of Mathematics\\ University of Athens\\ Panepistemiopolis\\ 15784 Athens\\ Greece} 
\email{\href{mailto:pananton@math.uoa.gr}{\texttt{pananton@math.uoa.gr}}} 
\author[A.~Damialis]{Apostolos Damialis}
\address{Department of Mathematics\\ University of Athens\\ Panepistemiopolis\\ 15784 Athens\\ Greece} 
\email{\href{mailto:damialis@math.uoa.gr}{\texttt{damialis@math.uoa.gr}}} 

\thanks{The research of the first author was partially supported by the Aristeia program of the Greek Secretariat for Research and Technology, co-financed by the European Union and the Greek state. The research of the third author has been co-financed by the European Union (European Social Fund) and Greek national funds through the Operational Program ``Education and Lifelong Learning'' of the National Strategic Reference Framework (NSRF)---Research Funding Program: Heraclitus II. Investing in knowledge society through the European Social Fund}

\begin{abstract}
Under proper hypotheses, we rigorously derive the Plateau angle conditions at triple junctions of diffused interfaces in three dimensions, starting from the vector-valued Allen--Cahn equation with a triple-well potential. Our derivation is based on an application of the divergence theorem using the divergence-free form of the equation via an associated stress tensor. 
\end{abstract}

\maketitle

\section{Introduction}
\label{section:introduction}
We consider the problem of determining contact angle conditions at triple junctions of diffused interfaces in three-dimensional space via the elliptic vector-valued Allen--Cahn equation
\begin{equation}\label{allen-cahn}
\Delta u - \nabla_{\! u} W (u) = 0
\end{equation}
for maps $u: \R^3 \to \R^3$ and a triple-well potential $W: \R^3 \to \R$. Equation \eqref{allen-cahn} is a vector analogue of the well-known scalar elliptic equation
\begin{equation}\label{scalar-allen-cahn}
\Delta u - W'(u) = 0
\end{equation}
for $u: \R^3 \to \R$ and a bistable potential $W:\R \to \R$ with two minima, which was introduced by Allen and Cahn \cite{allen-cahn} in the context of antiphase boundary motion. Here, $u$ is an order parameter that denotes the coexisting phases in the phenomenon of phase separation. We note that a vector-valued order parameter is necessary for the coexistence of three or more phases (see Bronsard and Reitich \cite{bronsard-reitich} and also Rubinstein, Sternberg, and Keller \cite{rubinstein-sternberg-keller}). Both equations are rescaled elliptic versions of corresponding evolution problems that involve a small parameter $\varepsilon$, which denotes the thickness of interfaces, and they are Euler--Lagrange equations of energy functionals, whose minimizers are related to minimal surfaces (see Modica and Mortola \cite{modica-mortola}, Modica \cite{modica}, and Baldo \cite{baldo}).

For the problem of contact angles in the case of soap films in three dimensions, the classical {\em Plateau angle conditions} state that
\begin{enumerate}
\item three soap films meet smoothly at equal angles of $120$ degrees along a curve,\label{law1}
\item four such curves meet smoothly at equal angles of about $109$ degrees at a point.\label{law2}
\end{enumerate}
The above laws hold in the isotropic case of soap films, which corresponds to systems of minimal surfaces (cf.\ Dierkes, Hildebrandt, and Sauvigny \cite[\S 4.15.7]{dierkes}). The angle of $120$ degrees in the first condition is the angle whose cosine is $-\frac{1}{2}$, which is exactly the angle in isotropic triple junctions in two dimensions, while the angle of about $109$ degrees in the second condition is the angle whose cosine is $-\frac{1}{3}$ (the so-called Maraldi angle). In the anisotropic case of mixtures of immiscible fluids, the angles above are not always equal and depend on the surface tension coefficients of each fluid, as in systems of constant mean curvature surfaces. In this case, the angles at the four triods determine the angles at the point singularity.

In \cite{bronsard-reitich}, for triple-well potentials, the authors linked at the level of formal asymptotics the diffused-interface problem with the associated sharp-interface problem and they established that to leading order the inner solution (that describes the solution near the triple junction) satisfies \eqref{allen-cahn}. That work is restricted on the plane. The rigorous study of \eqref{allen-cahn} for symmetric triple wells under the symmetries of the equilateral triangle was settled by Bronsard, Gui, and Schatzman \cite{bronsard-gui-schatzman} in two dimensions, and for symmetric quadruple-well potentials under the symmetries of the tetrahedron in three dimensions by Gui and Schatzman \cite{gui-schatzman}. Finally, Alikakos and Fusco \cite{alikakos-fusco-arma, alikakos-new-proof, fusco1} settled the general case of symmetric potentials under a general reflection group in arbitrary dimensions.

Concerning the minimal surface problem, Taylor \cite{taylor} showed that in three dimensions the only singular minimizing cones are a triod of planes meeting along a line at 120 degree angles, and a complex of six planes meeting with tetrahedral symmetry at a point, as in Plateau's laws. To our knowledge, results on minimal cones are not available in higher dimensions. Concerning the sharp-interface problem, the corresponding global geometric evolution problem in two dimensions for a single triple junction in a convex domain has been settled by Mantegazza, Novaga, and Tortorelli \cite{mantegazza-novaga-tortorelli} and Magni, Mantegazza, and Novaga \cite{magni-mantegazza-novaga}. Ilmanen, with co-authors, has announced very general results on the plane. The local existence for triple junctions on the plane was settled in Bronsard and Reitich \cite{bronsard-reitich}.

In this note we restrict ourselves to triple-well potentials defined over $\R^3$, with three global nondegenerate minima, and we assume the existence of an entire classical solution connecting the minima at infinity (see \cite{alikakos-fusco-arma, alikakos-new-proof, fusco1}). Such a solution partitions $\R^3$ in three regions that are separated by three interfaces which intersect along a line that we call the {\em spine} of the triod. In the symmetric case, this corresponds to one of the two singular minimizing cones for the associated Plateau problem.

Gui \cite{gui} considered the problem of determining the contact angles at a planar triple junction and under similar hypotheses to ours rigorously derived the so-called Young's law \cite{young}
\begin{equation}\label{youngs-law}
\frac{\sin \phi_1}{\sigma_{23}} = \frac{\sin \phi_2}{\sigma_{31}} = \frac{\sin \phi_3}{\sigma_{12}}
\end{equation}
for the angles $\phi_1$, $\phi_2$, $\phi_3$ between interfaces, where the $\sigma$'s are surface tension coefficients between neighboring phases. (See also the formal derivation in \cite{bronsard-reitich}.) We note that in three dimensions, the angles at a triod also obey Young's law, while the angles at a quadruple junction are a geometric consequence of the angle conditions at the four triods that form it (see Bronsard, Garcke, and Stoth \cite{bronsard-garcke-stoth} for the calculation), that is, given a quadruple-junction configuration, Plateau's second law follows from the first, which is the isotropic version of Young's law.

Our goal in the following is to extend Gui's work to three dimensions and rigorously derive Young's law for triods of interfaces as a property of solutions satisfying the next two hypotheses. (These will be justified and made precise in section \ref{section:statement}.) We note that the hypotheses below are satisfied by construction in Bronsard, Gui, and Schatzman \cite{bronsard-gui-schatzman} and Gui and Schatzman \cite{gui-schatzman}, and are theorems in \cite{alikakos-fusco-arma, alikakos-new-proof, fusco1,alikakos-fusco-hierarchy}.

\begin{hypothesis}
\label{h1}
Along rays emanating from any point of the spine, in the interior of regions, solutions converge exponentially to the corresponding minima of the potential.
\end{hypothesis}

\begin{hypothesis}
\label{h2}
Along lines parallel to interfaces and not parallel to the spine, in the interior of regions, solutions converge to {\em connections}, that is, to maps with argument the distance to the interface and with the property of connecting the minima of the potential at plus and minus infinity.
\end{hypothesis}

Our derivation makes use of the fact that \eqref{allen-cahn} is a divergence-free condition for a certain stress tensor, which appeared in Alikakos \cite{alikakos-basic-facts} in this context. For the planar analogue, the derivation of Bronsard and Reitich \cite{bronsard-reitich} is based on formal asymptotics, while the derivation of Gui \cite{gui} is closer to our spirit in terms of assumptions and rigor but uses Pohozaev-like identities instead of the stress tensor. (See \cite{alikakos-faliagas} for the connection between the two.) For the three-dimensional problem, related results appear in the theses \cite{antonopoulos} and \cite{damialis}. 

It should be noted that the problem in three dimensions is substantially more involved compared to the planar case, due to the fact that the singularity at a planar triple junction is a point that can be isolated, while the spine of a triod in three dimensions cannot, since it extends to infinity, and similar difficulties also arise in higher dimensions. We remark that our method applies easily to the two-dimensional case and that in higher dimensions the application is significantly harder and will appear elsewhere \cite{antonopoulos-damialis}.

The rest of the present note consists of two sections. In section \ref{section:statement} we set up the problem, stating all necessary assumptions, and formulate it in divergence-free form via the stress tensor. In section \ref{section:derivation} we apply the divergence theorem for the stress tensor on a ball, which is then blown up after a proper slicing. This slicing involves a surgery around the singularity which appears at the intersection of the surface of integration with the spine and a breaking up of the remaining part in a way that utilizes the hypotheses on the solutions at infinity, that is, Hypothesis \ref{h1} far from interfaces and Hypothesis \ref{h2} at fixed distances from them. This yields Young's law in the form of a balance of forces relation for the conormals of the three interfaces, which is equivalent to \eqref{youngs-law}.

\section{Statement of the problem and preliminaries}
\label{section:statement}

We start with \eqref{allen-cahn}
\[ \Delta u - \nabla_{\! u} W(u) = 0 \]
for $u: \R^3 \to \R^3$ and $W: \R^3 \to \R$, where $\nabla_{\! u} W (u) = (\partial W / \partial u_1, \partial W / \partial u_2, \partial W / \partial u_3)$. The potential $W$ is taken to be of class $C^2$, nonnegative, and with three nondegenerate global minima at points $a_1$, $a_2$, $a_3$, that is, $W(a_1)=W(a_2)=W(a_3)=0$, with $W(u)>0$ otherwise. Moreover, we ask that $W$ satisfies a mild coercivity assumption, that is, 
\[ \liminf_{\lvert u \rvert \to +\infty} W(u) > 0.\] 
Finally, note that we do not make any symmetry assumptions on $W$.

As explained in section \ref{section:introduction}, we consider solutions that partition the domain space in three regions via a triod of diffused flat interfaces. We distinguish three regions $C_i$ in $\R^3$ for $i=1, 2, 3$, such that the region $C_i$ contains the minimum $a_i$ with $\Gamma_{ij}$ being the interface that separates $C_i$ and $C_j$ (with $\Gamma_{ij} \equiv \Gamma_{ji}$). For each region $C_i$ we have that if $x \in C_i$, then $\lambda x \in C_i$ for $\lambda > 0$ (cone property).

We choose coordinates as follows. We take the origin on the spine, which we identify with the $x_3$ axis, and we further identify interface $\Gamma_{12}$ with the half plane $x_1 = 0$, $x_2 \geq 0$, $x_3 \in \R$, such that $x_1$ is the distance to $\Gamma_{12}$. We also recall here the spherical coordinates in three dimensions, that is,
\[ x_1 = r \cos \theta_1 \sin \theta_2,\quad x_2 = r \sin \theta_1 \sin \theta_2, \quad x_3 = r \cos \theta_2, \]
for an azimuthal angle $\theta_1 \in [0,2\pi)$, for a polar angle $\theta_2 \in [0,\pi]$, and for $r \geq 0$. In terms of the azimuthal angle $\theta_1$, the interface $\Gamma_{12}$ lies at $\theta_1 = \frac{\pi}{2}$.

The uniformly bounded entire solutions we consider satisfy 
\begin{equation}
\label{u-bound}
\lvert u (x) \rvert < C,
\end{equation}
globally in $\R^3$. Using this bound and linear elliptic theory (cf.\ the proof of Lemma \ref{regularity-lemma}), we also have the uniform bound
\begin{equation}
\label{grad-u-bound}
\lvert \nabla u (x) \rvert < C,
\end{equation}
again globally in $\R^3$. 

For such solutions we have two hypotheses. The first one concerns the fact that solutions converge exponentially to the corresponding equilibrium in the interior of each region. This has been verified under assumptions of symmetry on the potential by several authors (see \cite{bronsard-gui-schatzman, gui-schatzman,alikakos-fusco-arma,alikakos-new-proof,fusco1}) and we postulate that it holds for general potentials. 

\begin{hypothesis}[Exponential estimate]
\label{hypothesis1}
In the interior of the region $C_i$ there holds that 
\begin{equation}
\lvert u(x) - a_i \rvert \lesssim \e^{- \dist(x,\partial C_i)},
\end{equation}
where $\partial C_i =  \cup_{i \neq j} \Gamma_{ij}$.
\end{hypothesis}

(We use the notation $X \lesssim Y$ for the estimate $X \leq CY$, where $C$ is an absolute constant.)

The second hypothesis is that along directions parallel to interfaces solutions converge to ``one-dimensional'' heteroclinic connections $U_{ij} : \R \to \R^3$, which connect the equilibria $a_i$, $a_j$ at infinity, in the sense that
\[
\lim_{\eta \to -\infty} U_{ij}(\eta) = a_i \quad \text{and} \quad \lim_{\eta \to +\infty} U_{ij}(\eta) = a_j,
\]
where $\eta$ is the distance to the interface $\Gamma_{ij}$. 

\begin{hypothesis}[Connection hypothesis]
\label{hypothesis2}
Along directions parallel to an interface $\Gamma_{ij}$ solutions converge pointwise to a one-dimensional connection $U_{ij}(\eta)$ with argument the distance to the interface, that is, 
\begin{equation}
\lim_{\lvert x \rvert \to + \infty} u(x) = U_{ij} (\eta), \text{ for fixed } \eta := \dist(x,\Gamma_{ij}).
\end{equation}
\end{hypothesis}

These limiting functions are solutions to the associated Hamiltonian ODE system
\[
\ddot{U}_{ij} - \nabla W(U_{ij}) = 0
\]
with the property of connecting the minima of $W$ at infinity. (We refer to \cite{sternberg, alikakos-fusco-indiana, alikakos-betelu-chen} for further information on the connection problem.) We define the action of such a connection to be the nonnegative quantity
\begin{equation}
\sigma_{ij} = \sigma(U_{ij}) := \int_{-\infty}^{+\infty} \left( \frac{1}{2} \lvert \dot{U}_{ij} \rvert^2 + W(U_{ij}) \right) \dd \eta,
\end{equation}
and also note that connections satisfy the equipartition relation
\begin{equation}\label{equipartition}
\frac{1}{2} \lvert \dot{U}_{ij} \rvert^2 = W(U_{ij}).
\end{equation}

In support of the hypotheses above, we note that for potentials $W$ symmetric with respect to the group of symmetries of the equilateral triangle in $\R^3$, where $W$ is $C^2$ and with three global minima placed inside the partitioning regions, and under the coercivity assumption $\liminf_{\lvert u \rvert \to +\infty} W(u) > 0$, Hypothesis \ref{hypothesis1} is a theorem (cf.\ \cite{fusco1}). Moreover, Hypothesis \ref{hypothesis2} is also a theorem for symmetric potentials (cf.\ \cite{alikakos-fusco-hierarchy}), under the additional assumption of uniqueness and hyperbolicity of connections, which is generic by the appendix in \cite{gui-schatzman}. Finally, for nonsymmetric potentials $W$, Fusco \cite{fusco2} has established Hypothesis \ref{hypothesis1} for local minimizers of 
\[ \int \frac{1}{2} |\nabla u|^2 + W(u). \]
We note that the entire solution constructed by Alikakos and Fusco \cite{alikakos-fusco-arma, alikakos-new-proof, fusco1} is a local minimizer (cf.\ \cite{alikakos-fusco-hierarchy}). We expect that along the lines of \cite{fusco2} one should also be able to verify Hypothesis \ref{hypothesis2} for local minimizers. In conclusion, the major underlying hypothesis in our work is the existence of an entire solution connecting in a certain sense the minima for nonsymmetric potentials (cf.\ S\'aez Trumper\cite{saez}).

We will now reformulate \eqref{allen-cahn} via its associated stress tensor. This tensor was introduced in Alikakos \cite{alikakos-basic-facts} in the context of \eqref{allen-cahn}, where further applications are also presented. However, it is a well-known object in the physics literature (for instance, in the Landau and Lifshitz series; see \cite{alikakos-faliagas} for more information). We define the stress tensor $T$ as
\begin{equation}
\label{tensor-components}
T_{ij} (u) = u_{,i}\cdot u_{,j} - \delta_{ij} \left(\frac{1}{2} \lvert \nabla u \rvert^2 + W(u) \right)
\end{equation}
for maps $u: \R^n \to \R^m$, where $u_{,i} = \partial u / \partial x_i$ and where the dot denotes the Eu\-clid\-e\-an inner product in $\R^m$. In three dimensions (that is, for $n=3$) it is a $3 \times 3$ symmetric matrix
\[
T(u) = \frac{1}{2} 
\left( \begin{array}{c}
\lvert u_{,1} \rvert^2 - \lvert u_{,2} \rvert^2 - \lvert u_{,3} \rvert^2  - 2 W(u)\qquad 2 u_{,1} \!\cdot u_{,2}\qquad 2 u_{,1} \!\cdot u_{,3} \\
2 u_{,2} \!\cdot u_{,1}\qquad \lvert u_{,2} \rvert^2 - \lvert u_{,1} \rvert^2 - \lvert u_{,3} \rvert^2 - 2 W(u)\qquad 2 u_{,2} \!\cdot u_{,3} \\
2 u_{,3} \!\cdot u_{,1}\qquad 2 u_{,3} \!\cdot u_{,2}\qquad \lvert u_{,3} \rvert^2 - \lvert u_{,1} \rvert^2 - \lvert u_{,2} \rvert^2 - 2 W(u) \end{array} \right)
\]
with the property
\begin{equation}
\label{div-free}
\dv T = (\nabla u)^\top (\Delta u - \nabla_{\! u} W(u)),
\end{equation}
that is, $T$ is divergence-free when applied to solutions of \eqref{allen-cahn}. 

We also note that $T$ is invariant under rotations of the coordinate system, that is, it transforms as a tensorial quantity. To see this, consider an orthogonal transformation $Q$ and a new coordinate system $x' = Q x$. Letting $u'$ be the map acting on the new coordinates with $u'(x') = u(x)$, the chain rule gives that its gradient is transformed via $\nabla' u' = Q \nabla u$, where the prime denotes that the derivatives are taken with respect to the new coordinate system. Then, for the transformed tensor $T^{\prime}$, which is given by the similarity transformation
\[ T' = Q T Q^\top, \]
due to the form of the components in \eqref{tensor-components} and the continuity of $W$ there holds
\[ T^{\prime}_{ij} (u^\prime) = u^{\prime}_{,i} \cdot u^{\prime}_{,j} - \delta_{ij} \left(\frac{1}{2} \lvert \nabla^{\prime} u^{\prime} \rvert^2 + W(u^\prime) \right), \]
where again the prime denotes that the tensor is calculated in the new coordinate system. That is, the transformed tensor has exactly the same expression as the original one, except for the fact that it acts in the transformed coordinates.

Finally, we prove two lemmas that will be used in the following. The first is a consequence of Hypothesis \ref{hypothesis1} and linear elliptic theory, while the second follows from Hypothesis \ref{hypothesis2} and the Arzel\`a--Ascoli theorem.

\begin{lemma}
\label{regularity-lemma}
Solutions of \eqref{allen-cahn} satisfy the gradient estimate
\begin{equation}
\label{gradient-regularity}
\lvert \nabla u(x) \rvert \lesssim \e^{-\dist (x, \partial C_i)} \text{ for } x \in C_i.
\end{equation}
Moreover, a similar estimate holds for the potential $W(u)$, that is,
\begin{equation}
\label{potential-regularity}
\lvert W(u(x)) \rvert \lesssim \e^{-\dist (x, \partial C_i)} \text{ for } x \in C_i.
\end{equation}
\end{lemma}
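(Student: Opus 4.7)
The plan is to derive both estimates from Hypothesis \ref{hypothesis1} combined with standard interior elliptic regularity and the $C^2$-regularity of $W$ at its minima. Fix $x \in C_i$ and write $d = \dist(x, \partial C_i)$. I would split the argument into a ``far'' regime $d \geq 2$, where Hypothesis \ref{hypothesis1} provides genuine exponential decay, and a ``near'' regime $d < 2$, where the universal bounds \eqref{u-bound} and \eqref{grad-u-bound} already give \eqref{gradient-regularity} and \eqref{potential-regularity} after absorbing the harmless factor $\e^2$ into the implicit constant.

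For the gradient estimate in the far regime I rewrite \eqref{allen-cahn} as $\Delta(u - a_i) = \nabla_{\! u} W(u)$. Since $a_i$ is a nondegenerate minimum of $W$ and $W \in C^2$, Taylor expansion at $a_i$ gives $\lvert \nabla_{\! u} W(u(y)) \rvert \lesssim \lvert u(y) - a_i \rvert$ for $u(y)$ in a bounded region (which is guaranteed by \eqref{u-bound}). For any $y \in B_1(x)$ we have $B_1(x) \subset C_i$ and $\dist(y, \partial C_i) \geq d - 1$, so Hypothesis \ref{hypothesis1} yields $\lvert u(y) - a_i \rvert \lesssim \e^{-(d-1)} \lesssim \e^{-d}$, and the same bound for the right-hand side. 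A standard interior $L^\infty$-to-$C^1$ estimate for the Poisson equation on $B_1(x)$ then gives
\[
\lvert \nabla u (x) \rvert \lesssim \lVert u - a_i \rVert_{L^\infty(B_1(x))} + \lVert \nabla_{\! u} W(u) \rVert_{L^\infty(B_1(x))} \lesssim \e^{-d},
\]
proving \eqref{gradient-regularity}. (The universal bound \eqref{grad-u-bound} can in fact be obtained by the same argument applied to \eqref{u-bound}, which justifies using interior estimates component-wise without any boundary consideration.)

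For the estimate on $W(u)$ I use that both $W(a_i) = 0$ and $\nabla_{\! u} W(a_i) = 0$ at the nondegenerate minimum; so $C^2$-regularity gives $\lvert W(u(x)) \rvert \lesssim \lvert u(x) - a_i \rvert^2$, and Hypothesis \ref{hypothesis1} yields $\lvert W(u(x)) \rvert \lesssim \e^{-2 d}$, which a fortiori implies \eqref{potential-regularity}.

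I do not foresee any serious obstacle: the only conceptual step is passing from pointwise smallness of $u - a_i$ to smallness of $\nabla u$, and this is a routine consequence of interior elliptic regularity once the inhomogeneous term $\nabla_{\! u} W(u)$ has been shown to vanish linearly at the minimum. The slight cosmetic care needed is just the split into the ``far'' and ``near'' regimes so that the ball of radius one around $x$ can be assumed to sit comfortably inside $C_i$.
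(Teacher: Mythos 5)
Your proof is correct, and it rests on the same two pillars as the paper's: the observation that $\nabla_{\! u} W$ vanishes at the nondegenerate minimum $a_i$, so that Hypothesis \ref{hypothesis1} forces the right-hand side $\nabla_{\! u} W(u)$ of \eqref{allen-cahn} to decay like $\e^{-\dist(x,\partial C_i)}$, followed by interior elliptic regularity to transfer that decay to $\nabla u$. The execution differs in flavor: the paper works in the $L^2$ scale, first getting an $H^1$ bound on $u-a_i$, then differentiating the equation to obtain $\Delta u_{,i} = \partial^2 W(u)\cdot u_{,i}$ and an $H^2$ bound, and finally invoking the Sobolev embedding $H^2 \hookrightarrow L^\infty$ in $\R^3$; you instead apply a single $L^\infty$-to-$C^1$ interior estimate on a unit ball $B_1(x)$, which is more direct and, by fixing the radius, makes the uniformity of the constants transparent (the paper's nested domains $\Omega''\subset\subset\Omega'\subset\subset\Omega$ leave this implicit). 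Your explicit split into the regimes $d\geq 2$ and $d<2$ is a worthwhile piece of bookkeeping the paper omits, and your second-order Taylor expansion gives the stronger bound $\lvert W(u)\rvert \lesssim \e^{-2d}$, whereas the paper settles for the first-order mean value theorem and $\e^{-d}$; both of course suffice for \eqref{potential-regularity}. No gaps.
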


\begin{proof}
Let $\Omega$ be an open and bounded subset of the region $C_i$. We have that $u$ is a classical solution of \eqref{allen-cahn}, so from the uniform bound \eqref{u-bound} it follows that $u \in H^1(\Omega) \cap L^\infty(\Omega)$, and also $W(u) \in H^1(\Omega) \cap L^\infty(\Omega)$ and $\nabla_{\! u} W(u) \in H^1(\Omega) \cap L^\infty(\Omega)$, due to the continuity of $W$ and $\nabla_{\! u} W(u)$. Then, from the mean value theorem in $\Omega$ and Hypothesis \ref{hypothesis1}, we have
\begin{equation}\label{mean-value-theorem}
|\nabla_{\! u} W(u)| = |\nabla_{\! u} W(u) - \nabla_{\! u} W(a_i)| \leq |\partial^2 W(\hat{u})| |u(x) - a_i| \lesssim \e^{-\dist (x,\partial C_i)}
\end{equation}
for some $\hat{u}$ in $\Omega$, and also,
\[ W(u) = W(u) - W(a_i) = |\nabla_{\! u} W(\hat{u})| |u(x) - a_i| \lesssim \e^{-\dist (x,\partial C_i)}.\]

Now let $\Omega^{\prime\prime} \subset \subset \Omega^\prime \subset \subset \Omega$ be open and bounded subsets of $\Omega$. From the definition of the $H^1$-norm and interior elliptic regularity, we have that 
\begin{equation}\label{h1-estimate}
\| u_{,i} \|_{L^2(\Omega^\prime)} \leq \| u - a_i \|_{H^1(\Omega^\prime)} \lesssim \| u - a_i \|_{L^2(\Omega)} + \| \nabla_{\! u} W(u) \|_{L^2(\Omega)} \lesssim \e^{-\dist (x,\partial C_i)},
\end{equation}
using \eqref{mean-value-theorem} and Hypothesis \ref{hypothesis1}. 

Differentiating \eqref{allen-cahn} gives $\Delta u_{,i} = \partial^2 W(u) \cdot u_{,i}$, and since from higher interior regularity we get $u \in H^{3}_{\text{loc}} (\Omega)$, we have the estimate
\[ \| u_{,i} \|_{H^2(\Omega^{\prime\prime})} \lesssim \| u_{,i} \|_{L^2(\Omega^\prime)} + \| \partial^2 W(u) \cdot u_{,i} \|_{L^2(\Omega^\prime)} \lesssim \e^{-\dist (x,\partial C_i)}, \]
using \eqref{h1-estimate} and the continuity of the Hessian $\partial^2 W(u)$. Finally, from the Sobolev imbedding of $H^2$ in $L^\infty$ in $\R^3$, we get that
\[ \| u_{,i} \|_{L^\infty(\Omega^{\prime\prime})} \lesssim \e^{-\dist (x,\partial C_i)},\] 
from which the statement of the lemma follows.
\end{proof}

\begin{lemma}
\label{gradient-limits}
For solutions of equation \eqref{allen-cahn}, the following pointwise limits hold:
\begin{align}
&\lim u_{,1} (x) = \dot{U}(x_1), \text{ as } x_2 \to +\infty,\ x_3 \to +\infty,\label{connection-limit}\\
&\lim u_{,i} (x) = 0, \text{ as } x_2 \to +\infty,\ x_3 \to +\infty, \text{ for } i=2, 3,\label{zero-limit}
\end{align}
where without loss of generality\footnote{This is due to the invariance of the Laplacian under rotations and the continuity of $W$ and $\nabla_{\! u} W$.} we considered a coordinate system such that $x_1$ is the distance to an interface $\Gamma$, with $U$ its corresponding connection.
\end{lemma}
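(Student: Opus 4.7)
The plan is to use translation invariance of \eqref{allen-cahn}, combined with interior elliptic regularity and the Arzel\`a--Ascoli theorem, to upgrade the pointwise convergence in Hypothesis \ref{hypothesis2} to $C^{1}_{\text{loc}}$-convergence of a suitable family of translates.

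Concretely, fix $x_1 \in \R$ and pick any sequence $(y_n, z_n)$ with $y_n, z_n \to +\infty$. Define the translates
\[ v^{(n)}(\xi) := u(\xi_1,\, \xi_2 + y_n,\, \xi_3 + z_n), \]
each of which solves \eqref{allen-cahn} by translation invariance. Using the uniform bound \eqref{u-bound}, the $C^2$-regularity of $W$, and standard interior Schauder estimates (bootstrapping once to control the nonlinear term $\nabla_{\! u} W(u)$), we obtain $\|v^{(n)}\|_{C^{2,\alpha}(K)} \lesssim 1$ on every compact $K \subset \R^3$, uniformly in $n$. In particular $\{v^{(n)}\}$ and $\{\nabla v^{(n)}\}$ are equibounded and equicontinuous on every compact set.

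By Arzel\`a--Ascoli, some subsequence converges in $C^{1}_{\text{loc}}(\R^3)$ to a limit $v^{\infty} \in C^1(\R^3)$. For each fixed $\xi$, the point $(\xi_1, \xi_2 + y_n, \xi_3 + z_n)$ has modulus tending to infinity while its distance to $\Gamma$ stabilizes at $\xi_1$ once $\xi_2 + y_n > 0$; hence Hypothesis \ref{hypothesis2} identifies the pointwise limit as $v^{\infty}(\xi) = U(\xi_1)$. Since this limit is independent of the extracted subsequence, a standard contradiction argument forces the whole family $v^{(n)}$ to converge to $U(\,\cdot\,)$ in $C^{1}_{\text{loc}}(\R^3)$.

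Consequently, the partial derivatives converge to those of the limit:
\[ \partial_{\xi_1} v^{(n)}(\xi) \to \dot{U}(\xi_1), \qquad \partial_{\xi_2} v^{(n)}(\xi) \to 0, \qquad \partial_{\xi_3} v^{(n)}(\xi) \to 0, \]
locally uniformly in $\xi$. Evaluating at $\xi = (x_1, 0, 0)$ gives $u_{,1}(x_1, y_n, z_n) \to \dot{U}(x_1)$ and $u_{,i}(x_1, y_n, z_n) \to 0$ for $i = 2, 3$, which are exactly \eqref{connection-limit} and \eqref{zero-limit}. I do not anticipate any serious obstacle; the only mildly delicate point is that the Schauder constants must be uniform in the translation parameter, but this is automatic from the translation invariance of \eqref{allen-cahn} together with the global sup-norm bound on $u$.
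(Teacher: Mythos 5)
Your proposal is correct, and it overlaps with the paper's argument in its core mechanism (Arzel\`a--Ascoli applied to translates of $u$, identification of the limit via Hypothesis \ref{hypothesis2}, and a uniqueness-of-limits argument to upgrade subsequential to full convergence), but the route to the first limit \eqref{connection-limit} is genuinely different. The paper identifies $\dot U(x_1)$ \emph{weakly}: it integrates $u_{,1}\phi$ by parts against test functions $\phi\in C^\infty_{\mathrm c}$, passes to the limit using Hypothesis \ref{hypothesis2} to get a distributional identification of the limit with $\dot U$, and then matches this weak limit with the uniform subsequential limit produced by Arzel\`a--Ascoli applied to $\{u_{,1}\}_k$; the second limit \eqref{zero-limit} is handled separately by compactness of $\{u\}_k$ and the fact that the limit $U(x_1)$ is independent of $x_2,x_3$. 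You instead establish uniform interior $C^{2,\alpha}$ bounds on the translates (one bootstrap through $L^p$/Schauder theory, using translation invariance and the global bound \eqref{u-bound}), obtain $C^1_{\text{loc}}$ precompactness, identify the limit as $U(\xi_1)$ from the pointwise convergence of Hypothesis \ref{hypothesis2} alone, and read off all three derivative limits at once. What your approach buys: it treats \eqref{connection-limit} and \eqref{zero-limit} uniformly, avoids the test-function step entirely, and makes explicit the uniform-in-$k$ control of second derivatives that underlies the equicontinuity of $\{u_{,1}\}_k$ (which the paper asserts from continuity of $u_{,11}$ on a compact set, a point your Schauder step substantiates). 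What the paper's route buys is that the weak identification of $\dot U$ requires only the gradient bound \eqref{grad-u-bound} rather than uniform H\"older estimates on second derivatives. One minor point of care in your write-up: you should note, as you implicitly do, that for fixed $\xi$ the signed distance of $(\xi_1,\xi_2+y_n,\xi_3+z_n)$ to the half-plane $\Gamma$ equals $\xi_1$ only once $\xi_2+y_n>0$, which holds for $n$ large; with that said, the argument is complete.
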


\begin{proof}
We choose an interface and an appropriately rotated coordinate system such that the $x_1$ axis is normal to the interface and measures the signed distance from it, and let $\Omega$ be an interval of $x_1$ that is compact and symmetric with respect to the origin.

In order to establish the limit \eqref{connection-limit}, we consider test functions $\phi \in C^{\infty}_{\text{c}} (\Omega)$ and integrate by parts to get
\[ \int_{-\infty}^{+\infty} u_{,1} \phi(x_1) \dd x_1 = \int_{-\infty}^{+\infty} u \phi^\prime (x_1) \dd x_1, \]
and also
\[ \lim_{\substack{x_2 \to +\infty \\ x_3 \to +\infty}} \int_{-\infty}^{+\infty} u \phi^\prime (x_1) \dd x_1 = \int_{-\infty}^{+\infty} U(x_1) \phi^\prime (x_1) \dd x_1 = -\int_{-\infty}^{+\infty} \dot{U}(x_1) \phi (x_1) \dd x_1, \]
utilizing Hypothesis \ref{hypothesis2}. Combining the last two equations, we have that
\begin{equation}
\label{weak-l}
\lim_{\substack{x_2 \to +\infty \\ x_3 \to +\infty}} \int_{-\infty}^{+\infty} u_{,1} \phi(x_1) \dd x_1 = \int_{-\infty}^{+\infty} \dot{U}(x_1) \phi (x_1) \dd x_1.
\end{equation}
Now define the sequence 
\[ \{ u_{,1} \}_k := u_{,1} (x_1, x_2 + k, x_3 + k), \]
which is uniformly bounded from \eqref{grad-u-bound} and also equicontinuous in $\Omega$, since its derivative $\{u_{,11}\}_k$ is continuous and $\Omega$ is compact. Then, the Arzel\`a--Ascoli theorem and \eqref{weak-l} yield that there exists a  subsequence which converges uniformly to $\dot{U}(x_1)$. Assuming that another subsequence converges to a different limit, we reach a contradiction from the uniqueness of the weak limit in \eqref{weak-l}, thus concluding the argument.

For the limit \eqref{zero-limit}, consider the sequence 
\[ \{u\}_k := u (x_1, x_2 + k, x_3 + k), \]
which is again uniformly bounded and equicontinuous in $\Omega$ from \eqref{u-bound}, \eqref{grad-u-bound}. Then, the Arzel\`a--Ascoli theorem gives that there exists a subsequence $\{u\}_{k_l}$ which converges uniformly over compact subsets of $\Omega \times \R^2$ to a solution $v$ of \eqref{allen-cahn}. Using now Hypothesis \ref{hypothesis2}, we have that $v = U(x_1)$, and as a result, for $i=2,3$, applying Arzel\`a--Ascoli to the sequence of the derivative gives
\[ \lim_{\substack{x_2 \to +\infty \\ x_3 \to +\infty}} u_{,i} (x) = v_{,i} (x) = U_{,i} (x_1) = 0. \qedhere \]
\end{proof}

\section{Derivation of Young's law}
\label{section:derivation}

In this section we will prove the following theorem.
\begin{theorem}
For the contact angles at the spine of a triod of intersecting interfaces $\Gamma_{12}$, $\Gamma_{23}$, $\Gamma_{31}$, Young's law holds in the form of a balance of forces relation, that is,
\begin{equation}
\label{youngs-law-balance}
\sigma_{12} \nu_{12} + \sigma_{23} \nu_{23} + \sigma_{31} \nu_{31} = 0,
\end{equation}
where $\sigma_{ij}$ is the action of the connection $U_{ij}$ of each interface $\Gamma_{ij}$ and $\nu_{ij}$ the corresponding unit conormal, that is, a unit vector that is tangent to $\Gamma_{ij}$ and normal to the spine.
\end{theorem}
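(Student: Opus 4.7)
The plan is to apply the divergence theorem to the stress tensor $T$ over a large ball $B_R$ and extract Young's law by letting $R \to \infty$. Since $T$ is divergence-free on solutions of \eqref{allen-cahn} by \eqref{div-free}, the identity $\int_{\partial B_R} T \hat{\nu} \, dS = 0$ holds, where $\hat{\nu} = x/R$ is the outward unit normal. I would decompose $\partial B_R$ into three \emph{interface strips} $S_{ij}$ (thin angular tubes around the great semicircles $\Gamma_{ij} \cap \partial B_R$), two \emph{polar caps} $P_\pm$ around the intersections of the spine with the sphere, and three \emph{interior pieces} $I_k$ lying deep inside each region $C_k$. The polar cap half-width $\epsilon = \epsilon(R)$ would be chosen to vanish faster than $R^{-1/2}$ (e.g.\ $\epsilon = R^{-3/4}$), while the strip half-width $M = M(R)$, measured as signed distance $\eta$ to the corresponding interface, would tend to infinity slowly, say $M = 2\log R$.

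For the interior pieces, each point in $I_k$ lies at Euclidean distance $\gtrsim M$ from every interface, so by Lemma \ref{regularity-lemma} one has $|T| \lesssim e^{-M}$; since the sphere has area $O(R^2)$, the total contribution is $O(R^2 e^{-M}) = o(R)$. For the polar caps, the uniform bounds \eqref{u-bound}, \eqref{grad-u-bound} and continuity of $W$ yield $|T| = O(1)$ globally, so each cap of area $O(R^2\epsilon^2)$ contributes $O(R^2 \epsilon^2) = o(R)$ by our choice of $\epsilon$.

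The core computation is the strip contribution, which I would perform for $\Gamma_{12}$ in coordinates aligned with it, invoking the tensorial character of $T$ established in section \ref{section:statement} to transport the conclusion to each of the three interfaces. Parameterize $S_{12}$ by $(\eta, \alpha) \in [-M, M] \times [\epsilon, \pi - \epsilon]$, where $\eta = x_1$ is the signed distance to $\Gamma_{12}$ and $\alpha$ is the polar angle along the semicircle. A computation in spherical coordinates gives the area element $dS = R \, d\eta \, d\alpha \, (1 + o(1))$ uniformly in the strip (provided $M/(R\epsilon) \to 0$, which holds for the chosen scales), while the outward normal tends to $\hat{\nu} \to (0, \sin\alpha, \cos\alpha)$. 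By Lemma \ref{gradient-limits}, pointwise as $R \to \infty$ one has $u_{,1} \to \dot U_{12}(\eta)$ and $u_{,2}, u_{,3} \to 0$, so by the equipartition relation \eqref{equipartition} the tensor converges to $T \to \mathrm{diag}(0, -|\dot U_{12}|^2, -|\dot U_{12}|^2)$. Passing to the limit under the integral (to be justified via dominated convergence using the exponential bounds of Lemma \ref{regularity-lemma}), I would obtain
\[ \int_{S_{12}} T \hat{\nu}\, dS \;=\; -R \left(\int_{-M}^M |\dot U_{12}|^2 \, d\eta\right) \int_\epsilon^{\pi - \epsilon} (0, \sin\alpha, \cos\alpha) \, d\alpha + o(R). \]
The $\alpha$-integral equals $(0, 2\cos\epsilon, 0) \to 2\nu_{12}$ and the $\eta$-integral tends to $\sigma_{12}$ by \eqref{equipartition}, so the $\Gamma_{12}$ strip contributes $-2R\, \sigma_{12}\, \nu_{12} + o(R)$; by rotational invariance each $S_{ij}$ contributes $-2R\, \sigma_{ij}\, \nu_{ij} + o(R)$.

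Summing the three contributions, the identity $\int_{\partial B_R} T \hat{\nu}\, dS = 0$ becomes $-2R\,(\sigma_{12}\nu_{12} + \sigma_{23}\nu_{23} + \sigma_{31}\nu_{31}) = o(R)$; dividing by $-2R$ and letting $R \to \infty$ yields \eqref{youngs-law-balance}. The principal obstacle, absent in the planar case, is the surgery around the spine: the singular set is now a line rather than an isolated point, so the polar cap contributions must be controlled through the above balance of scales, and the limits in $R$, $\epsilon$, and $M$ must be taken simultaneously with compatible rates. A related subtlety is the application of Lemma \ref{gradient-limits} on the strip, whose stated form concerns $x_2, x_3 \to +\infty$: while it extends to arbitrary escape directions within a fixed interface by rotation invariance and continuity of $W$, uniformity of the pointwise limit (needed to interchange limit and integration) will have to be argued from Lemma \ref{regularity-lemma}.
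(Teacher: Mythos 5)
Your proposal follows the paper's argument in all essentials: the same divergence-theorem identity for the divergence-free stress tensor over a large ball, the same two-scale surgery (caps whose renormalized area vanishes, $R\epsilon^{2}\to 0$, yet which recede from the spine, $R\epsilon\to\infty$; strips whose width tends to infinity but is $o(R\epsilon)$), the same exponential estimates of Lemma \ref{regularity-lemma} off the strips, and the same dominated-convergence passage, via Lemma \ref{gradient-limits} and the equipartition relation, to a contribution $-2\sigma_{ij}\nu_{ij}$ per interface. Your intrinsic $(\eta,\alpha)$ parametrization is in fact tidier than the paper's graph parametrization of the strip: the spine-direction component of $T\hat{\nu}$ visibly cancels through $\int_{\epsilon}^{\pi-\epsilon}\cos\alpha\,\mathrm{d}\alpha=0$, whereas the paper has to argue separately that the $j=1,3$ components drop out.

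The one point that needs repair is the location of the ball. You center $B_R$ at the origin, so on the strip $S_{12}$ the escape directions $(0,\sin\alpha,\cos\alpha)$ have $x_3=R\cos\alpha$ bounded near the equator and tending to $-\infty$ on the lower hemisphere, and Lemma \ref{gradient-limits} as stated --- limits as $x_2\to+\infty$, $x_3\to+\infty$ --- does not cover those points. Your proposed fix, ``rotation invariance,'' is not the right reason: a rotation about the $x_1$-axis carrying one escape direction within $\Gamma_{12}$ to another moves the spine and does not preserve the solution, so it cannot be used to transport the lemma. Two correct repairs are available: (i) rerun the Arzel\`a--Ascoli argument of Lemma \ref{gradient-limits} for translates $u(x_1,x_2+k,x_3+c_k)$ with an arbitrary sequence $c_k$, identifying the limit through Hypothesis \ref{hypothesis2}, which applies to every direction parallel to the interface and not parallel to the spine; or (ii) do what the paper does and center $B_R$ at $(0,0,2R)$, so that $x_3\geq R\to+\infty$ on all of $\partial B_R$ and the lemma applies verbatim. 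With either fix your argument closes and delivers \eqref{youngs-law-balance} exactly as in the paper.
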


\begin{proof}
Since the solutions of \eqref{allen-cahn} which we consider are constructed as minimizers over balls (see \cite{alikakos-fusco-arma,alikakos-new-proof,fusco1} for the variational setup of the problem), we take a ball $B_R$ centered at $(0,0,2R)$ in order to apply the divergence theorem on it using \eqref{div-free}, that is, the fact that the stress tensor $T$ is divergence-free. This gives
\begin{equation}
\label{div-theorem}
0 = \frac{1}{R} \int_{B_R} \dv T \dd x = \frac{1}{R} \int_{\partial B_R} T \nu  \dd S,
\end{equation}
where $\nu$ is the outer unit normal to the boundary $\partial B_R$. In what follows we will study the limit 
\[ \lim_{R \to +\infty} \frac{1}{R} \int_{\partial B_R} T \nu  \dd S \]
in order to utilize the hypotheses on the solutions at infinity. Note that we chose the center of $B_R$ in such a way that for $(x_1,x_2,x_3) \in \partial B_R$, we have $x_3 \neq 0$ and $x_3 \to +\infty$, as $R \to +\infty$.

The complication in applying the divergence theorem in our problem is that the surface of integration intersects with the spine at two points, where we have no information on the behavior of solutions. In our setup these are the two poles of $\partial B_R$, at $(0,0,R)$ and $(0,0,3R)$. To circumvent this, we perform a surgery by choosing two appropriately sized spherical caps around the north and south poles of the sphere. To this end, let $\psi_2 (R)$ be a small polar angle that defines the spherical caps $\mathcal{C}$ (see Figure \ref{fig}), where by $\mathcal{C}$ we denote the union of both caps at the two poles. We require that there holds
\begin{equation}
\label{psi2-condition1}
R \sin \psi_2(R) \to +\infty, \text{ as } R \to +\infty,
\end{equation}
such that the distance of the boundary of the caps to the spine grows as $R \to +\infty$, which also yields that the geodesic radius $R\psi_2 (R)$ of the caps grows as $R \to +\infty$. Moreover, we require that
\begin{equation}
\label{psi2-condition2}
R  \psi_2(R)^2 \to 0, \text{ as } R \to +\infty,
\end{equation}
such that the renormalized area of the caps (in the sense below) shrinks as $R \to +\infty$. To see this, first note that condition \eqref{psi2-condition2} also yields that
\begin{equation}
\label{psi2-condition3}
\psi_2(R) \to 0, \text{ as } R \to +\infty.
\end{equation}
The renormalized area of the caps can be easily calculated as a surface integral using spherical coordinates, that is,
\[ \frac{1}{R} \int_{\mathcal{C}}  \dd S = \frac{2}{R} \int_{0}^{2\pi}\!\!\! \int_{0}^{\psi_2} R^2 \sin\theta_2 \dd \theta_2 \dd \theta_1 = 4\pi R (1-\cos \psi_2). \]
Using \eqref{psi2-condition3} we have that $(1-\cos \psi_2(R)) = O(\psi_{2}(R)^{2})$, which via \eqref{psi2-condition2} gives
\begin{equation}
\label{lim-caps}
\lim_{R \to +\infty} \frac{1}{R} \int_{\mathcal{C}}  \dd S = 0.
\end{equation}
To sum up, we choose the size of the caps to be small enough so as not to matter in the integration, but at the same time large enough so that we always stay away from the singularity.
Then, for the integral of $T\nu$ on the caps $\mathcal{C}$, we have the estimate
\[ \bigg\lvert \frac{1}{R} \int_{\mathcal{C}} T \nu \dd S \bigg\rvert \leq \frac{1}{R} \int_{\mathcal{C}} \lvert T \rvert \lvert \nu \rvert  \dd S \lesssim \frac{1}{R} \int_{\mathcal{C}}  \dd S, \]
where we used the bounds \eqref{u-bound}, \eqref{grad-u-bound}, and estimate \eqref{potential-regularity} of Lemma \ref{regularity-lemma} for bounding the Frobenius norm $\lvert T \rvert$ by a constant. Using now \eqref{lim-caps}, we finally have
\[ \lim_{R \to +\infty} \frac{1}{R} \int_{\mathcal{C}} T \nu \dd S = 0. \]

\begin{figure}
\begin{center}
\includegraphics{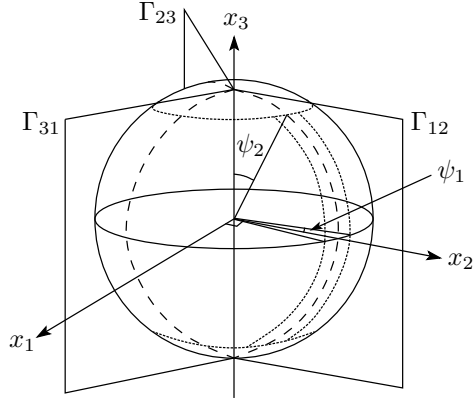}
\end{center}
\begin{picture}(0,0)
\put(-87,38){$x_{1}$}
\put(79,69){$x_{2}$}
\put(-6,160){$x_{3}$}
\put(65,120){$\Gamma_{12}$}
\put(-38,162){$\Gamma_{23}$}
\put(-82,120){$\Gamma_{31}$}
\put(75,102){$\psi_{1}$}
\put(-1,112){$\psi_{2}$}
\end{picture}
\caption{The sphere $\partial B_R$ centered at the spine, with two caps of polar angle $\psi_2(R)$ and a strip at geodesic distance $R\psi_1(R)$ around the intersection of the interface $\Gamma_{12}$ with $\partial B_R$.}
\label{fig}
\end{figure}

For the remaining part of the sphere, we will work separately for each interface that intersects it. For the interface $\Gamma_{12}$, which lies at azimuthal angle $\theta_1 = \frac{\pi}{2}$, we work with the slice 
\[ \mathcal{S} = \left\{ (\theta_1, \theta_2, r) ~\Big|~ \frac{\pi}{2} - \delta \leq \theta_1 \leq \frac{\pi}{2} + \delta \right\} \]
for a fixed angle $\delta$, such that no other interface intersects with the slice. To study the limit 
\[ \lim_{R \to +\infty} \frac{1}{R} \int_{\mathcal{S} \setminus (\mathcal{S}\cap\mathcal{C})} T\nu \dd S, \]
we distinguish two parts in $\mathcal{S} \setminus (\mathcal{S}\cap\mathcal{C})$, a neighborhood around the meridian at the intersection of the interface with the sphere and the rest. We take the set $\mathcal{N} \subset \mathcal{S} \setminus (\mathcal{S}\cap\mathcal{C})$, such that $\mathcal{N}$ is the strip that is contained between two planes parallel to $\Gamma_{12}$, one in the region $C_1$ and one in $C_2$, and at equal distance $R \sin \psi_1(R)$ from it (see Figure \ref{fig}). For the angle $\psi_1(R)$, we require that it satisfies
\begin{equation}
\label{psi1-condition1}
\psi_1(R) < \psi_2(R), \text{ with } \sqrt{2} \sin \psi_1(R) < \sin \psi_2(R),
\end{equation}
such that the azimuthal angle $\psi_1$ that defines the width of the strip $\mathcal{N}$ at the equator of $\partial B_R$ is strictly smaller than the polar angle $\psi_2$ that defines the caps. This condition forces $\mathcal{N}$ to be a subset of $\mathcal{S} \setminus (\mathcal{S}\cap\mathcal{C})$. Moreover, we require that
\begin{equation}
\label{psi1-condition2}
R \sin \psi_1(R) \to +\infty, \text{ as } R \to +\infty,
\end{equation}
such that the distance of the interface to the two planes that define $\mathcal{N}$ grows as $R\to+\infty$. From conditions \eqref{psi1-condition1} and \eqref{psi2-condition3} we also have that 
\begin{equation}
\label{psi1-condition3}
\psi_1(R) \to 0, \text{ as } R \to +\infty.
\end{equation}
An example of angles $\psi_1$, $\psi_2$ that satisfy all the above requirements is
\[ \psi_1(R) = R^{-4/5} \quad \text{and} \quad \psi_2(R) = R^{-3/4} \]
with condition \eqref{psi1-condition1} holding true for $R>1025$ for this particular choice.

Given the following decomposition of the set $\mathcal{S} \setminus (\mathcal{S}\cap\mathcal{C})$,
\[ \mathcal{S} \setminus (\mathcal{S}\cap\mathcal{C}) = \mathcal{N} ~\cup~ \left( (\mathcal{S} \setminus (\mathcal{S}\cap\mathcal{C})) \setminus \mathcal{N} \right), \]
we have the estimate
\begin{equation}
\label{estimate-on-the-rest}
\begin{split}
\bigg\lvert \frac{1}{R} \int_{(\mathcal{S} \setminus (\mathcal{S}\cap\mathcal{C})) \setminus \mathcal{N}} T \nu \dd S \bigg\rvert 
&\leq \frac{1}{R} \int_{(\mathcal{S} \setminus (\mathcal{S}\cap\mathcal{C})) \setminus \mathcal{N}} \lvert T \rvert \lvert \nu \rvert \dd S \\ 
&\lesssim \frac{1}{R} \int_{(\mathcal{S} \setminus (\mathcal{S}\cap\mathcal{C})) \setminus \mathcal{N}} \e^{-\dist (x, \Gamma_{12})} \dd S \\
&\lesssim R \e^{-R\sin\psi_1(R)},
\end{split}
\end{equation}
using estimates \eqref{gradient-regularity}, \eqref{potential-regularity} from Lemma \ref{regularity-lemma} for estimating $\lvert T \rvert$ by the exponential and since the domain of integration is of order $O(R^2)$. Finally, taking the limit as $R \to +\infty$ and using condition \eqref{psi1-condition2}, we have that
\[ \lim_{R \to +\infty} \frac{1}{R} \int_{(\mathcal{S} \setminus (\mathcal{S}\cap\mathcal{C})) \setminus \mathcal{N}} T \nu \dd S = 0. \]

We turn now to the last part, which is the integral on the strip $\mathcal{N}$. We parametrize $\mathcal{N}$ as the graph of
\[ f_R (x_1, x_3) = \sqrt{R^2 - x_{1}^{2} - (x_3 - 2R)^2} \]
for $x_1 \in (-R\sin\psi_1, R\sin\psi_1)$, $x_2 \in (R\sqrt{\sin^2 \psi_2 - \sin^2 \psi_1},R)$, $x_3 \in (2R - R\cos\psi_2,$ $2R + R\cos\psi_2)$, where $x_1 = x_2 = x_3 = 0$ is the origin, which is not the center of the sphere. We set $y_1 = x_1$, $y_2 = x_2$, and $y_3 = x_3 - 2R$, such that $(y_1,y_2,y_3) \in \partial B_R$ with $y_{1}^{2} + y_{2}^{2} + y_{3}^{2} = R^2$. Then, $\mathcal{N}$ is the graph of 
\[ f_R (y_1,y_3) = \sqrt{R^2 - y_{1}^{2} - y_{3}^{2}} \]
for
\begin{equation}
\label{intervals}
\begin{cases} 
y_1 \in (-R\sin\psi_1, R\sin\psi_1),\smallskip\\ 
y_2 \in (R\sqrt{\sin^2 \psi_2 - \sin^2 \psi_1},R),\smallskip\\
y_3 \in (-R\cos\psi_2, R\cos\psi_2).
\end{cases}
\end{equation}
For the surface element we calculate
\[ \dd S = \sqrt{1+ \left( \frac{\partial f_R}{\partial y_1} \right)^2 + \left( \frac{\partial f_R}{\partial y_3} \right)^2} \dd y_3 \dd y_1= \frac{R}{y_2} \dd y_3 \dd y_1,\] 
where $y_2 = \sqrt{R^2 - y_{1}^{2} - y_{3}^{2}}$, while the outer unit normal is
\[ \nu = \frac{y}{R}. \]
Using this parametrization, the integral on $\mathcal{N}$ is written as 
\begin{equation}
\label{parametrized} 
\frac{1}{R} \int_\mathcal{N} T\nu \dd S = \frac{1}{R} \int_{-R\sin\psi_1}^{R\sin\psi_1} \int_{-R\cos\psi_2}^{R\cos\psi_2} T(v) \frac{y}{y_2} \dd y_3 \dd y_1,
\end{equation}
where
\[ u(x_1, x_2, x_3) = u(y_1, y_2, y_3 + 2R) =: v(y_1, y_2, y_3), \]
and $v_{,i} (y) = u_{,i} (x)$ for $i=1,2,3$, so $T(v(y)) = T(u(x))$.

To take the limit as $R \to +\infty$ in \eqref{parametrized} and apply Hypotheses \ref{hypothesis1} and \ref{hypothesis2}, we use Lebesgue's dominated convergence theorem. The components of the vector quantity to be integrated on the right-hand side of \eqref{parametrized} are given by
\begin{equation}
\label{ty-components}
\left(T(v) \frac{y}{y_2} \right)_{\! i} = T_{ij}(v) \frac{y_j}{y_2} \text{ for } i=1,2,3,
\end{equation}
using the summation convention. To check whether dominated convergence applies for each component, we write the corresponding integral as
\[ \int_{-\infty}^{+\infty} \bigg( \frac{1}{R} \chi_{[-R\sin\psi_1,R\sin\psi_1]} \int_{-R\cos\psi_2}^{R\cos\psi_2} T_{ij}(v) \frac{y_j}{y_2} \dd y_3 \bigg) \dd y_1, \]
where $\chi$ is the characteristic function, and we would like to show that the quantity in parentheses is dominated by some integrable function of $y_1$. Using the estimates \eqref{gradient-regularity} and \eqref{potential-regularity} of Lemma \ref{regularity-lemma}, in $\mathcal{N}$ there holds $\lvert T_{ij}(v) \rvert \lesssim \e^{-\lvert y_1 \rvert}$, which gives the estimate
\begin{equation*}
\begin{split}
\bigg\lvert \frac{1}{R} \chi_{[-R\sin\psi_1,R\sin\psi_1]} \int_{-R\cos\psi_2}^{R\cos\psi_2} T_{ij}(v) \frac{y_j}{y_2} \dd y_3 \bigg\rvert
&\leq \frac{1}{R} \int_{-R\cos\psi_2}^{R\cos\psi_2} \lvert T_{ij}(v) \rvert \frac{\lvert y_j \rvert}{y_2} \dd y_3 \\
&\lesssim \frac{1}{R} \int_{-R\cos\psi_2}^{R\cos\psi_2} \e^{-\lvert y_1 \rvert} \frac{\lvert y_j \rvert}{y_2} \dd y_3 \\
&= \e^{-\lvert y_1 \rvert} \bigg( \frac{1}{R}  \int_{-R\cos\psi_2}^{R\cos\psi_2} \frac{\lvert y_j \rvert}{y_2} \dd y_3 \bigg).
\end{split}
\end{equation*}
Setting
\[ I_j = \frac{1}{R}  \int_{-R\cos\psi_2}^{R\cos\psi_2} \frac{\lvert y_j \rvert}{y_2} \dd y_3 \text{ for } j=1,2,3, \]
we argue that $I_j \leq 2$, for $j=1,2,3$, and, as a consequence, dominated convergence applies in the limit $R\to +\infty$.

For $j=1$, using the extremum values of $y_1$, $y_2$ in the intervals \eqref{intervals} and condition \eqref{psi1-condition1}, we have
\[
\begin{split}
I_1 &= \frac{1}{R}  \int_{-R\cos\psi_2}^{R\cos\psi_2} \frac{\lvert y_1 \rvert}{y_2} \dd y_3 
\leq \frac{1}{R}  \int_{-R\cos\psi_2}^{R\cos\psi_2} \frac{R \sin \psi_1}{R \sqrt{\sin^2 \psi_2 - \sin^2 \psi_1}} \dd y_3\\
& < \frac{1}{R}  \int_{-R\cos\psi_2}^{R\cos\psi_2} \frac{\sin \psi_1}{\sin \psi_1} \dd y_3 = \frac{1}{R}  \int_{-R\cos\psi_2}^{R\cos\psi_2} \dd y_3 = 2 \cos \psi_2 \leq 2,
\end{split}
\]
since $\sin \psi_1 < \sqrt{\sin^2 \psi_2 - \sin^2 \psi_1}$ from \eqref{psi1-condition1}.

For $j=2$, we have
\[
I_2 = \frac{1}{R}  \int_{-R\cos\psi_2}^{R\cos\psi_2} \frac{\lvert y_2 \rvert}{y_2} \dd y_3 
= \frac{1}{R}  \int_{-R\cos\psi_2}^{R\cos\psi_2} \dd y_3 = 2 \cos \psi_2 \leq 2,
\]
since $y_2 > 0$ in $\mathcal{N}$.

Finally, for $j=3$, we change variables to
\[ \tilde{y}_3 = \frac{1}{R} y_3 \text{ with } \!\dd \tilde{y}_3 = \frac{1}{R} \dd y_3, \]
and, using the extremum values of $y_1$ from \eqref{intervals}, we estimate
\[
\begin{split}
I_3 &= \frac{1}{R}  \int_{-R\cos\psi_2}^{R\cos\psi_2} \frac{\lvert y_3 \rvert}{y_2} \dd y_3 
= \int_{-\cos\psi_2}^{\cos\psi_2} \frac{R\lvert \tilde{y}_3 \rvert}{y_2} \dd \tilde{y}_3
= \int_{-\cos\psi_2}^{\cos\psi_2} \frac{R\lvert \tilde{y}_3 \rvert}{\sqrt{R^2 - y_{1}^{2} - R^2 \tilde{y}_{3}^{2}}} \dd \tilde{y}_3\\
&\leq \int_{-\cos\psi_2}^{\cos\psi_2} \frac{\lvert \tilde{y}_3 \rvert}{\sqrt{1 - \sin^2 \psi_1 - \tilde{y}_{3}^{2}}} \dd \tilde{y}_3 
= 2 \int_{0}^{\cos\psi_2} \frac{\tilde{y}_3}{\sqrt{1 - \sin^2 \psi_1 - \tilde{y}_{3}^{2}}} \dd \tilde{y}_3,
\end{split}
\]
since the function $\lvert \tilde{y}_3 \rvert \big/ \sqrt{1 - \sin^2 \psi_1 - \tilde{y}_{3}^{2}}$ is even. We explicitly calculate the last integral to get
\begin{align*}
2 &\int_{0}^{\cos\psi_2} \frac{\tilde{y}_3}{\sqrt{1 - \sin^2 \psi_1 - \tilde{y}_{3}^{2}}} \dd \tilde{y}_3
= -2 \int_{0}^{\cos\psi_2} \left( \sqrt{1 - \sin^2 \psi_1 - \tilde{y}_{3}^{2}}\, \right)^\prime \dd \tilde{y}_3 \\
&\quad = -2 \sqrt{1 - \sin^2 \psi_1 - \cos^2 \psi_2} + 2 \sqrt{1 - \sin^2 \psi_1} \\ 
&\quad = - 2 \sqrt{\sin^2 \psi_2 - \sin^2 \psi_1}  + 2 \cos \psi_1 \leq 2 \cos \psi_1 \leq 2.
\end{align*}

To conclude with the calculation of the limit as $R \to +\infty$ in \eqref{parametrized}, we distinguish the following limits in $\mathcal{N}$, as consequences of Hypothesis \ref{hypothesis2} and Lemma \ref{gradient-limits}:
\begin{equation}
\label{v-limits}
\begin{cases}
\lim_{R\to+\infty} v(y) = \lim_{\substack{x_2\to+\infty\\ x_3 \to+\infty}} u(x) = U_{12} (x_1),\\
\lim_{R\to+\infty} v_{,1}(y) = \lim_{\substack{x_2\to+\infty\\ x_3 \to+\infty}} u_{,1}(x) = \dot{U}_{12} (x_1),\\
\lim_{R\to+\infty} v_{,2}(y) = \lim_{\substack{x_2\to+\infty\\ x_3 \to+\infty}} u_{,2}(x) = 0,\\
\lim_{R\to+\infty} v_{,3}(y) = \lim_{\substack{x_2\to+\infty\\ x_3 \to+\infty}} u_{,3}(x) = 0.
\end{cases}
\end{equation}
Using the extremum values of the intervals in \eqref{intervals} and condition \eqref{psi1-condition1}, we also have that in $\mathcal{N}$ there holds
\begin{equation}
\label{y1y2-limit}
\frac{1}{R} \left\lvert \frac{y_1}{y_2} \right\rvert \leq \frac{1}{R} \frac{R\sin\psi_1}{R\sqrt{\sin^2\psi_2 - \sin^2 \psi_1}} < \frac{1}{R} \frac{\sin\psi_1}{\sin\psi_1} \to 0, \text{ as } R \to +\infty,
\end{equation}
\begin{equation}
\label{y3y2-limit}
\frac{1}{R} \left\lvert \frac{y_3}{y_2} \right\rvert \leq \frac{1}{R} \frac{R\cos\psi_2}{R\sqrt{\sin^2\psi_2 - \sin^2 \psi_1}} < \frac{1}{R} \frac{\cos\psi_2}{\sin\psi_1} \to 0, \text{ as } R \to +\infty,
\end{equation}
where in the last limit we also used condition \eqref{psi1-condition2} for the limit of the denominator. Using now \eqref{y1y2-limit}, \eqref{y3y2-limit} and since the elements of $T$ are bounded by a constant from \eqref{u-bound}, \eqref{grad-u-bound}, and estimate \eqref{potential-regularity} of Lemma \ref{regularity-lemma}, we have
\[\lim_{R\to+\infty} \frac{1}{R} T(v) \frac{y}{y_2} = \lim_{R\to+\infty} \frac{1}{R} (T_{12}, T_{22}, 
T_{32})^\top, \]
that is, only the components for $j=2$ in \eqref{ty-components} do not vanish in the limit.
But, using \eqref{v-limits}, we further have that
\[ \lim_{R\to+\infty} \frac{1}{R} T_{12} = \lim_{R\to+\infty} \frac{1}{R} v_{,1} \!\cdot v_{,2} = 0\quad\text{and}\quad \lim_{R\to+\infty} \frac{1}{R} T_{32} = \lim_{R\to+\infty} \frac{1}{R} v_{,3} \!\cdot v_{,2} = 0. \]

Finally,
\[ \lim_{R\to+\infty} \frac{1}{R} \int_\mathcal{N} T\nu \dd S = 
\bigg( \lim_{R\to+\infty} \frac{1}{R} \int_{-R\sin\psi_1}^{R\sin\psi_1} \int_{-R\cos\psi_2}^{R\cos\psi_2} T_{22}(v) \dd y_3 \dd y_1 \bigg) (0,1,0)^\top. \]
Plugging in the component $T_{22}$ into the last integral, we calculate the limit
\[ 
\lim_{R\to+\infty} \frac{1}{R} \int_{-R\sin\psi_1}^{R\sin\psi_1} \int_{-R\cos\psi_2}^{R\cos\psi_2} \frac{1}{2} \Big( \lvert v_{,2} \rvert^2 - \lvert v_{,1} \rvert^2 - \lvert v_{,3} \rvert^2 - 2 W(v) \Big) \dd y_3 \dd y_1
\]
via the change of variables $y_3 = R \tilde{y}_3$, which gives 
\[
\lim_{R\to+\infty} \int_{-R\sin\psi_1}^{R\sin\psi_1} \int_{-\cos\psi_2}^{\cos\psi_2} \frac{1}{2} \Big( \lvert v_{,2} \rvert^2 - \lvert v_{,1} \rvert^2 - \frac{1}{R^2} \lvert v_{,3} \rvert^2 - 2 W(v) \Big) \dd \tilde{y}_3 \dd y_1
\]
with a slight abuse of notation for $v_{,3}$. Passing the limit inside the last integral and using conditions \eqref{psi2-condition3} and \eqref{psi1-condition2}, the limits in \eqref{v-limits} give
\begin{align*}
&\int_{-\infty}^{+\infty} \int_{-1}^{1} -\left( \frac{1}{2} \lvert \dot{U}_{12}(y_1) \rvert^2 + W(U_{12}(y_1)) \right) \dd \tilde{y}_3 \dd y_1\\
&\quad=-2\int_{-\infty}^{+\infty} \left( \frac{1}{2} \lvert \dot{U}_{12}(y_1) \rvert^2 + W(U_{12}(y_1)) \right) \dd y_1\\
&\quad=-2\sigma_{12}.
\end{align*}
Thus, we have shown that for the slice $\mathcal{S}$ around the interface $\Gamma_{12}$ there holds
\[ \lim_{R\to+\infty} \frac{1}{R} \int_{\mathcal{S}} T \nu \dd S = -2\sigma_{12}\nu_{12},\]
where $\nu_{12} = (0,1,0)^\top$. 

Since the stress tensor $T$ is invariant under rotations, we can apply the same procedure for the other two interfaces for appropriately rotated coordinate systems and appropriate slices (in order to cover the whole sphere) to get 
\[ \sigma_{12}\nu_{12} + \sigma_{23}\nu_{23} + \sigma_{31}\nu_{31} =0,\]
using \eqref{div-theorem}, where the $\nu_{ij}$'s are the conormals of the corresponding interfaces $\Gamma_{ij}$. This concludes the proof.
\end{proof}

We remark that the balance of forces relation \eqref{youngs-law-balance} is equivalent to Young's law \eqref{youngs-law}. This can be easily deduced by multiplying \eqref{youngs-law-balance} with the unit normal of each interface.

\nocite{*}
\bibliographystyle{plain}

\end{document}